\newcommand{\Rset}{\mathbb{R}}
\newcommand{\Nset}{\mathbb{N}}
\newcommand{\cB}{\mathcal{B}}
\newcommand{\cD}{\mathcal{D}}
\newtheorem{theorem}{Theorem}[section]}
\newtheorem{lemma}[theorem]{Lemma}}
\newtheorem{problem}[theorem]{Problem}}
\newtheorem{remark}[theorem]{Remark}}
\title{\LARGE \bf
An Algorithm for Solving Quadratic Optimization Problems with Nonlinear Equality Constraints 
}
\author{Tuan T. Nguyen, Mircea Lazar and Hans Butler% <-this % stops a space
%\thanks{*This work was not supported by any organization}% <-this % stops a space
\thanks{The authors are with the Department of Electrical Engineering,
	Eindhoven University of Technology, P.O. Box 513, 5600
	MB Eindhoven, The Netherlands.
	E-mails: {\tt\small \{t.t.nguyen,m.lazar,h.butler\}@tue.nl}}%
}
\begin{document}

\maketitle
\thispagestyle{empty}
\pagestyle{empty}

%%%%%%%%%%%%%%%%%%%%%%%%%%%%%%%%%%%%%%%%%%%%%%%%%%%%%%%%%%%%%%%%%%%%%%%%%%%%%%%%
\begin{abstract}
The classical method to solve a quadratic optimization problem with nonlinear equality constraints is to solve the Karush-Kuhn-Tucker (KKT) optimality conditions using Newton's method. This approach however is usually computationally demanding, especially for large-scale problems. This paper presents a new computationally efficient algorithm for solving quadratic optimization problems with nonlinear equality constraints. It is proven that the proposed algorithm converges locally to a solution of the KKT optimality conditions. Two relevant application problems, fitting of ellipses and state reference generation for electrical machines, are presented to demonstrate the effectiveness of the proposed algorithm.
\end{abstract}

%%%%%%%%%%%%%%%%%%%%%%%%%%%%%%%%%%%%%%%%%%%%%%%%%%%%%%%%%%%%%%%%%%%%%%%%%%%%%%%%
\section{INTRODUCTION}
Quadratic optimization problems with nonlinear equality constraints arise very frequently in science and engineering. They have a wide range of applications in computer vision~\cite{Gander1994,Halif1998,Fitzgibbon1999,Yu2004}, mechatronics~\cite{MeekerThesis,RubenAutomatica2012,Carpiuc2015}, system identification~\cite{Pelt2001ACC,Yeredor2004}, etc. They are also the heart of advanced optimization solvers such as trust region method~\cite{More1993,Rendl1997} and sequential quadratic programming~\cite{Zhu2008SQP,Morales2011}.

The classical method to solve a quadratic optimization problem with nonlinear equality constraints is to solve the Karush-Kuhn-Tucker (KKT) optimality conditions~\cite{KuhnTucker1951} using the iterative Newton's method~\cite{Nocedalbook}. The disadvantage of this approach is that it introduces additional optimization variables, i.e. Lagrange multipliers, which increases the complexity of the problem. In addition, this approach requires evaluation and storing of the Hessian matrix.

%This increases the computation time per iteration and the amount of memory required, especially for large-scale problems.

To reduce the computational load, this paper aims for an optimization algorithm which bypasses the KKT optimality conditions and implements a direct iterative search for the optimum. The quadratic optimization problem with nonlinear equality constraints is first transformed into a least 2-norm problem of an underdetermined nonlinear system of equations. A well known method to find a solution of an underdetermined system of equations is the generalized Newton's method~\cite{Levin2001}. However, this method only searches for a feasible solution of the system of equations.

%To reduce the computational load, this paper proposes an optimization algorithm which bypasses the KKT optimality conditions and implements a direct iterative search for the optimum. The idea is inspired by the fact that the commutation optimization problem is actually a special type of optimization problems which searches for the minimum 2-norm solution of an underdetermined nonlinear system of equations. A well-known method to find a solution of an underdetermined system of equations is the generalized Newton's method~\cite{Levin2001}. However, this method only searches for a feasible solution of the system of equations. 

In this paper we propose an alternative algorithm which searches for the minimum 2-norm solution of an underdetermined system of equations. The update of the algorithm at every iteration is defined as an interpolation between the generalized Newton's algorithm update and a proposed update that aims at minimizing the 2-norm of the solution. It is proven that there exists a suitable interpolation coefficient such that the developed algorithm converges locally to a local minimum of the 2-norm solution of the underdetermined system of equations. 

Compared to the classical approach which solves the KKT conditions using Newton's method, the proposed algorithm does not introduce additional optimization variables and does not require evaluation of the Hessian matrix. Therefore, the proposed algorithm requires less computational cost and less amount of memory. This feature will be beneficial for large-scale problems and real-time applications where the time for computation is limited. Two benchmark application problems, fitting of ellipses and state reference generation for electrical machines, are presented to demonstrate the effectiveness of the proposed algorithm.

% Compared to the generalized Newton's method, the proposed algorithm has guaranteed local convergence.

%The algorithm proposed in this paper on the other hand searches for the minimum 2-norm solution of the system of equations. The algorithm is the interpolation between the generalized Newton's iteration and a proposed minimum 2-norm iteration. It is proven that the proposed algorithm converges locally to a local minimum 2-norm solution of the underdetermined system of equations. Additionally, the computational cost is low since the algorithm does not introduce additional optimization variables and does not require evaluation of the Hessian matrix. The advantage of the proposed algorithm in computational cost and the amount of memory required will be beneficial for large-scale problems and real-time applications where the time for computation is limited. Two examples will be presented to demonstrate the effectiveness of the proposed algorithm.

The remainder of this paper is organized as follows. Section~\ref{sec:notation} introduces the notation used in the paper. The optimization problem is formulated in Section~\ref{sec:probform}. Section~\ref{sec:algorithm} presents the proposed numerical optimization algorithm and proves the local convergence property of the algorithm. Examples are shown in Section~\ref{sec:example} to demonstrate the effectiveness of the algorithm. Section~\ref{sec:conclusions} summarizes the conclusions.

%%%%%%%%%%%%%%%%%%%%%%%%%%%%%%%%%%%%%%%%%%%%%%%%%%%%%%%%%%%%%%%%%%%%%%%%%%%%%%%%
\section{NOTATION}\label{sec:notation}
Let $\Nset$ denote the set of natural numbers, $\mathbb{R}$ denote the set of real numbers, $\mathbb{R}_{>0}$ denote the set of positive real numbers. The notation $\mathbb{R}_{[c_1,c_2)}$ denotes the set $\{ c\in \mathbb{R} : c_1 \leq c < c_2\}$. Let $\mathbb{R}^{n}$ denote the set of real column vectors of dimension $n$. Let $\mathbb{R}^{n \times m}$ denote the set of real $n \times m$ matrices. For a vector $x \in \Rset^n$, $x_{[i]}$ denotes the ith element of $x$. The notation $0_{n \times m}$ denotes the $n \times m$ zero matrix and $I_n$ denotes the $n \times n$ identity matrix. Let $\|\cdot\|_2$ denote the 2-norm. The Nabla symbol $\nabla$ denotes the gradient operator. For a vector $x \in \Rset^n$ and a mapping $\Phi:\Rset^n \rightarrow \Rset$
\begin{equation}
\nabla_x \Phi(x) = \begin{bmatrix}
\frac{\partial \Phi(x)}{\partial x_{[1]}} & \frac{\partial \Phi(x)}{\partial x_{[2]}} & \ldots & \frac{\partial \Phi(x)}{\partial x_{[n]}}
\end{bmatrix} .
\end{equation}
Let $\cB (x_0, r)$ denote the open ball $\{x \in \Rset^n : \|x -x_0 \|_2 < r \}$.

%Let $\mathbb{R}$ and $\mathbb{Z}$ denote the set of real numbers and the set of integer numbers, respectively. The notation $\mathbb{Z}_{(c_1,c_2]}$ denotes the set $\mathbb{S}=\{ k\in \mathbb{Z} | c_1< k \leq c_2\}$. Let $\mathbb{R}^{n \times m}$ denote the set of real $n \times m$ matrices. For a matrix $Z\in \mathbb{R}^{n \times m}$, $[Z]_{ij} \in \mathbb{R}$ denotes the element on the $i$-th row and the $j$-th column of $Z$, $[Z]_{i\bullet} \in \mathbb{R}^{1 \times m}$ denotes the $i$-th row of $Z$ and $[Z]_{\bullet j} \in \mathbb{R}^{n \times 1}$ denotes the $j$-th column of $Z$. Let $1_n$ denotes a vector in $\mathbb{R}^{n \times 1}$ with $[1_n]_i=1$ for all $i \in \mathbb{Z}_{[1,n]}$. 

%%%%%%%%%%%%%%%%%%%%%%%%%%%%%%%%%%%%%%%%%%%%%%%%%%%%%%%%%%%%%%%%%%%%%%%%%%%%%%%%
\section{PROBLEM FORMULATION}\label{sec:probform}
Consider a quadratic optimization problem with nonlinear equality constraints:
\begin{problem}\label{prb:originalprob}
	\begin{align}
	& \min_{z}  z^T P z + 2 q^T z  \nonumber\\
	\text{subject to} & \nonumber\\
	& G(z)=0_{m \times 1} , \nonumber
	\end{align}
\end{problem}
where $z \in \mathbb{R}^n$, $q \in \mathbb{R}^n$, $P \in \mathbb{R}^{n \times n}$ is a positive definite matrix and $G$ is a nonlinear mapping from $\mathbb{R}^n$ to $\mathbb{R}^m$. Here, $n$ is the number of optimization variables and $m$ is the number of constraints. In this paper, we are only interested in the case when the constraint set has an infinite number of points, i.e. $m<n$, since the other cases are trivial.

Let us define
\begin{equation} \label{eqn:ztox}
x = P^{1/2} z + P^{-1/2} q ,
\end{equation}
where $x\in \mathbb{R}^n$. It follows that
\begin{equation} \label{eqn:xtoz}
z = P^{-1/2} (x - P^{-1/2} q) .
\end{equation}
The constraint can be rewritten in term of $x$ as follows
\begin{equation}
G(z) = G(P^{-1/2} (x - P^{-1/2} q)) =: F(x) .
\end{equation}
It is obvious that $F$ is also a mapping from $\mathbb{R}^n$ to $\mathbb{R}^m$. The cost function is also rewritten as a function of $x$ as follows
\begin{equation}
z^T P z + 2 q^T z = x^T x - q^T P^{-1} q .
\end{equation}
Since the optimization problem is not affected by the affine term $ q^T P^{-1} q$, Problem~\ref{prb:originalprob} reduces to
\begin{problem}\label{prb:prob}
	\begin{align}
	& \min_{x}  x^T x \nonumber\\
	\text{subject to} & \nonumber\\
	& F(x)=0_{m \times 1} . \nonumber
	\end{align}
\end{problem}

The above derivation shows that any quadratic optimization problems in the form of Problem~\ref{prb:originalprob} can be transformed into the form of Problem~\ref{prb:prob}. Therefore, instead of solving Problem~\ref{prb:originalprob}, we can solve Problem~\ref{prb:prob} to get the solution in $x$. The solution in $z$ can then be deduced from~\eqref{eqn:xtoz}.

%%%%%%%%%%%%%%%%%%%%%%%%%%%%%%%%%%%%%%%%%%%%%%%%%%%%%%%%%%%%%%%%%%%%%%%%%%%%%%%%
\section{PROPOSED ALGORITHM}\label{sec:algorithm}
%This section proposes a computationally efficient optimization algorithm for solving Problem~\ref{prb:prob}. But first, the standard approach to solve a quadratic optimization problem with nonlinear equality constraints using Lagrange multipliers and Newton's method is reviewed.

This section first reviews the standard approach to solve a quadratic optimization problem with nonlinear equality constraints using Lagrange multipliers and Newton's method. This method will be referred to as the Lagrange-Newton method for short. Then, the proposed algorithm for solving Problem~\ref{prb:prob} is presented.

\subsection{Lagrange-Newton method}
A common approach to solve the optimization problem~\ref{prb:prob} is to solve the set of optimality conditions, which are known as the KKT conditions, using the Newton's method. 

First, define the Lagrange function
\begin{equation}
L(x,\lambda) = x^T x + \lambda^T F(x) ,
\end{equation}
where $\lambda\in \mathbb{R}^m$ is the vector of Lagrange multipliers. The optimality conditions of Problem~\ref{prb:prob} are:
\begin{align}\label{eqn:KKTcons}
\nabla_{[x , \lambda]^T} L(x,\lambda)^T &= \begin{bmatrix}
\nabla_{x} L(x,\lambda) & \nabla_{\lambda} L(x,\lambda)
\end{bmatrix}^T \nonumber \\
&= \begin{bmatrix}
2x + \nabla_x F(x)^T \lambda \\
F(x)
\end{bmatrix} = 0_{(n+m)\times 1} .
\end{align}
The system of equations~\eqref{eqn:KKTcons} is solved iteratively using Newton's method. Let $x_k$ and $\lambda_k$ be the estimated values of $x$ and $\lambda$ at any iteration $k\in \Nset$. Then the new estimated values of $x$ and $\lambda$ are computed as follows:
\begin{align}~\label{eqn:LagrangeNewton}
& \begin{bmatrix}
x_{k+1} \\ \lambda_{k+1}
\end{bmatrix} \nonumber\\&= \begin{bmatrix}
x_{k} \\ \lambda_{k}
\end{bmatrix} - \left(\nabla^2_{[x_k , \lambda_k]^T} L(x_k,\lambda_k)\right)^{-1} \nabla_{[x_k , \lambda_k]^T} L(x_k,\lambda_k)^T ,
\end{align}
%\begin{equation}~\label{eqn:LagrangeNewton}
%\begin{bmatrix}
%x_{k+1} \\ \lambda_{k+1}
%\end{bmatrix} = \begin{bmatrix}
%x_{k} \\ \lambda_{k}
%\end{bmatrix}  - \left(\nabla^2_{\begin{bmatrix}
%x_k \\ \lambda_k
%\end{bmatrix}} L(x_k,\lambda_k)\right)^{-1} \nabla_{\begin{bmatrix}
%x_k \\ \lambda_k
%\end{bmatrix}} L(x_k,\lambda_k)^T ,
%\end{equation}
where $\nabla^2_{[x_k , \lambda_k]^T} L(x_k,\lambda_k)$ is the Hessian matrix of $L(x_k,\lambda_k)$:
\begin{equation}
\nabla^2_{[x , \lambda]^T} L(x,\lambda) = \begin{bmatrix}
\nabla^2_{x} L(x,\lambda) & \nabla_x F(x)^T \\
\nabla_x F(x) & 0_{m \times m}
\end{bmatrix} ~.
\end{equation}

The iterative process stops when a predefined accuracy $\varepsilon$ is reached, i.e.
\begin{equation}\label{eqn:stopcondition}
\|F(x_k)\|_2 \leq \varepsilon,
\end{equation}
where $\varepsilon$ is a small positive value.

This algorithm converges locally to a local minimum of Problem~\ref{prb:prob} and the convergence rate is quadratic. The proof of convergence of the Newton's method can be found in~\cite{Kantorovicbook,Rallbook}. However, this method introduces additional variables, i.e. Lagrange multipliers, which increases the complexity of the problem. In addition, the need to evaluate and store the Hessian matrix increases the computational time and the amount of memory needed.

\subsection{Proposed algorithm}
In this section, we propose a new algorithm which does not introduce additional optimization variables and does not require computation of the Hessian matrix. Instead of formulating the optimality conditions~\eqref{eqn:KKTcons}, we directly solve the system of equations
\begin{equation} \label{eqn:xcons}
	F(x)=0_{m \times 1}.
\end{equation}

The proposed method to solve~\eqref{eqn:xcons} is inspired by the classical Newton's method. The original idea of the iterative Newton's algorithm is to linearize $F(x)$ around the current guess $x_k$ at iteration $k$:
\begin{equation} \label{eqn:linearizeF}
F(x) \approx \tilde{F}(x):= F(x_k) + \nabla F(x_k) (x - x_k) .
\end{equation}
Then the new guess $x_{k+1}$ of the solution has to satisfy
\begin{equation} \label{eqn:linFeq0}
\tilde{F}(x_{k+1})=0_{m \times 1} ,
\end{equation}
which is equivalent to
\begin{equation} \label{eqn:linFeq0equi}
\nabla F (x_k) x_{k+1} = \nabla F(x_k) x_k - F(x_k) .
\end{equation}

Since the system of equations~\eqref{eqn:linFeq0} is underdetermined, i.e. $m<n$, it has an infinite number of solutions. A feasible solution of~\eqref{eqn:linFeq0} is
\begin{equation} \label{eqn:generalizedNewton}
x_{k+1}= x_k - \nabla F(x_k)^{-R} F(x_k), ~ k=0,1,\ldots,
\end{equation}
where $\nabla F(x_k)^{-R}$ is the minimum 2-norm generalized inverse, or the right inverse of $\nabla F(x_k)$~\cite{Rao1972}:
\begin{equation}
\nabla F(x_k)^{-R} = \nabla F(x_k)^T (\nabla F(x_k) \nabla F(x_k)^T)^{-1} .
\end{equation}
The update~\eqref{eqn:generalizedNewton} is known as the generalized Newton's method for underdetermined systems of equations, which provides a feasible solution of~\eqref{eqn:linFeq0}. However, instead of just computing a feasible solution, we can compute the minimum 2-norm solution of~\eqref{eqn:linFeq0}, which is the solution that has smallest value of $x_{k+1}^T x_{k+1}$, i.e.
\begin{equation} \label{eqn:newiter1}
x_{k+1} = \nabla F(x_k)^{-R} (\nabla F(x_k) x_k - F(x_k)) ,
\end{equation}
or, equivalently
\begin{equation} \label{eqn:newiter2}
x_{k+1} = \nabla F(x_k)^{-R} \nabla F(x_k) x_k -  \nabla F(x_k)^{-R} F(x_k) .
\end{equation}

The generalized Newton's method~\eqref{eqn:generalizedNewton} only provides a feasible solution of the system of equations~\eqref{eqn:xcons}, but its local convergence has been proven in~\cite{BENISRAEL1966,Levin2001}. On the other hand, the new proposed iteration~\eqref{eqn:newiter2} provides the minimum 2-norm solution of~\eqref{eqn:xcons}, but its convergence has not been proven yet. From the authors' experience, sometimes iteration~\eqref{eqn:newiter2} does not converge even if the initial guess is very close to the minimum 2-norm solution of~\eqref{eqn:xcons}. To solve this issue of~\eqref{eqn:newiter2}, we propose a new iteration, which is the interpolation between iteration~\eqref{eqn:generalizedNewton} and iteration~\eqref{eqn:newiter2}, i.e.
\begin{IEEEeqnarray}{rCl} \label{eqn:proposediter}
x_{k+1} = \alpha x_k + (1-\alpha) \nabla && F(x_k)^{-R} \nabla F(x_k) x_k \nonumber \\
&& -  \nabla F(x_k)^{-R} F(x_k) ,
\end{IEEEeqnarray}
where $0 < \alpha < 1$. The proposed iteration~\eqref{eqn:proposediter} inherits the local convergence property of~\eqref{eqn:generalizedNewton}, while it converges to the minimum 2-norm fixed point of~\eqref{eqn:newiter2}, as it will be outlined next.

In what follows we will prove that iteration~\eqref{eqn:proposediter} converges locally to a fixed point that satisfies the KKT optimality conditions~\eqref{eqn:KKTcons}.

First, we introduce simplified notation for brevity. Let us denote 
\begin{equation}
J_k := J(x_k)=\nabla F(x_k)
\end{equation}
as the Jacobian matrix of $F(x_k)$, and 
\begin{equation}
T_k:=T(x_k)=\nabla F(x_k)^{-R}=J_k^T(J_k J_k^T)^{-1}
\end{equation}
as the right inverse of $J_k$. Here, $J_k\in \mathbb{R}^{m\times n}$ and $T_k\in \mathbb{R}^{n\times m}$. It follows that:
\begin{equation}
J_k T_k = I_m .
\end{equation}
Assume that $x^*$ is a fixed point of~\eqref{eqn:proposediter}, let us denote
\begin{align}
F^* &:= F(x^*), \\
J^* &:= J(x^*), \\
T^* &:= T(x^*).
\end{align} 
With the simplified notation, the proposed iteration~\eqref{eqn:proposediter} can be rewritten as:
\begin{equation} \label{eqn:proposediter1}
x_{k+1} = \alpha x_k + (1-\alpha) T_k J_k x_k - T_k F_k .
\end{equation}

\begin{lemma} \label{lem:meanvalue}
Let $\cD$ be a convex subset of $\Rset^n$ in which $F:\cD \rightarrow \mathbb{R}^m$ is differentiable and $J(x)$ is Lipschitz continuous for all~$x \in \cD$, i.e. there exists a $\gamma>0$ such that
\begin{equation}
\| J(x)-J(y) \|_2 \leq \gamma \|x-y \|_2 , \text{ for all } x,y \in \cD .
\end{equation}
Then
\begin{equation}
\| F(x)-F(y)-J(y)(x-y) \|_2 \leq \frac{\gamma}{2} \|x-y \|_2^2 , \text{ for all } x,y \in \cD .
\end{equation}
\end{lemma}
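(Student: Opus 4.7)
The plan is to reduce the lemma to a standard Taylor remainder estimate obtained by integrating the Jacobian along the straight segment from $y$ to $x$. Since $\cD$ is convex, the segment $\{y+t(x-y): t\in[0,1]\}$ lies entirely in $\cD$, so the auxiliary map $\phi:[0,1]\to\Rset^m$ defined by $\phi(t):=F(y+t(x-y))$ is well defined and differentiable by the chain rule, with $\phi'(t)=J(y+t(x-y))(x-y)$.

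Next I would apply the fundamental theorem of calculus componentwise to $\phi$ to write
\begin{equation}
F(x)-F(y) = \phi(1)-\phi(0) = \int_0^1 J(y+t(x-y))(x-y)\,dt .
\end{equation}
Subtracting $J(y)(x-y) = \int_0^1 J(y)(x-y)\,dt$ from both sides then yields
\begin{equation}
F(x)-F(y)-J(y)(x-y) = \int_0^1 \bigl[J(y+t(x-y))-J(y)\bigr](x-y)\,dt .
\end{equation}

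Finally I would take the 2-norm, push it inside the integral via the standard inequality $\|\int_0^1 g(t)\,dt\|_2 \leq \int_0^1 \|g(t)\|_2\,dt$, bound the integrand using submultiplicativity and the Lipschitz hypothesis
\begin{equation}
\|J(y+t(x-y))-J(y)\|_2 \leq \gamma\,\|t(x-y)\|_2 = \gamma t\,\|x-y\|_2 ,
\end{equation}
and compute $\int_0^1 \gamma t\,dt = \gamma/2$, producing the claimed bound $\frac{\gamma}{2}\|x-y\|_2^2$.

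The argument is essentially routine; the only subtlety worth mentioning is that moving the norm inside a vector-valued integral and using the operator-norm submultiplicativity requires that each component of the integrand be integrable, which follows from continuity of $J$ (implied by its Lipschitz property) on the compact segment. No creative step is needed beyond setting up $\phi$ correctly and invoking the convexity of $\cD$ to guarantee that the segment stays in the domain.
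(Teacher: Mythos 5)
Your proposal is correct and follows essentially the same route as the paper's proof: both write the remainder $F(x)-F(y)-J(y)(x-y)$ as $\int_0^1 \left(J(y+t(x-y))-J(y)\right)(x-y)\,dt$, move the norm inside the integral, apply the Lipschitz bound to get the integrand $\gamma t\|x-y\|_2^2$, and integrate to obtain $\gamma/2$. Your version merely makes the fundamental-theorem-of-calculus step and the integrability remark explicit, which the paper leaves implicit.
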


\begin{proof}
This result can be proved using the mean value theorem:
\begin{IEEEeqnarray}{rCl} 
	& & \| F(x)-F(y)-J(y)(x-y) \|_2 \nonumber\\
	&=& \| \int_{0}^{1} \left(J(y + \phi(x-y))-J(y)\right) (x-y) d\phi \|_2 \nonumber \\
	& \leq & \int_{0}^{1} \|  \left(J(y + \phi(x-y))-J(y)\right) (x-y)  \|_2 d\phi \nonumber \\
	& \leq & \int_{0}^{1} \|  \left(J(y + \phi(x-y))-J(y)\right) \|_2 \|(x-y) \|_2 d\phi \nonumber \\
	& \leq & \int_{0}^{1} \gamma \phi \|(x-y) \|_2^2 d\phi \nonumber \\
	&=& \frac{\gamma}{2} \|x-y \|_2^2 .
\end{IEEEeqnarray}
\end{proof}
%\begin{IEEEeqnarray}{rCl} 
%	& & \| F(x)-F(y)-J(y)(x-y) \|_2 \nonumber\\
%	&=& \| \int_{0}^{1} \left(J(y + \theta(x-y))-J(y)\right) (x-y) d\theta \|_2 \nonumber \\
%	& \leq & \int_{0}^{1} \|  \left(J(y + \theta(x-y))-J(y)\right) (x-y)  \|_2 d\theta \nonumber \\
%	& \leq & \int_{0}^{1} \|  \left(J(y + \theta(x-y))-J(y)\right) \|_2 \|(x-y) \|_2 d\theta \nonumber \\
%	& \leq & \int_{0}^{1} \gamma \theta \|(x-y) \|_2^2 d\theta \nonumber \\
%	&=& \frac{\gamma}{2} \|x-y \|_2^2 .
%\end{IEEEeqnarray}

\begin{lemma} \label{lem:normeq1}
Let $\cD$ be a subset of $\mathbb{R}^n$ where $J(x)$ and $T(x)$ are well defined. For any $x \in \cD$, it holds that
\begin{equation}
\| I_n - T(x)J(x) \|_2 = 1 .
\end{equation}
\end{lemma}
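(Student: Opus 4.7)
The plan is to recognize that $T(x)J(x) = J(x)^T(J(x)J(x)^T)^{-1}J(x)$ is an orthogonal projector, so that $I_n - T(x)J(x)$ is also an orthogonal projector, and then use the fact that any nonzero orthogonal projection has spectral norm equal to one.

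First I would verify that $M := T(x)J(x)$ satisfies $M = M^T$ and $M^2 = M$. Symmetry follows because $(JJ^T)^{-1}$ is symmetric (as $JJ^T$ is symmetric and invertible under the standing rank assumption that makes $T(x)$ well defined), so $M^T = J^T((JJ^T)^{-1})^T J = J^T(JJ^T)^{-1} J = M$. Idempotence is immediate from $JT = I_m$ (established just before the lemma), since $M^2 = J^T(JJ^T)^{-1}(JJ^T)(JJ^T)^{-1}J = J^T(JJ^T)^{-1}J = M$. Consequently $N := I_n - M$ is also symmetric and idempotent, hence an orthogonal projector.

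Next I would compute the rank of $N$. Since $M$ is an orthogonal projector onto the row space of $J(x)$, which has dimension $m$ (again using that $T(x)$ is well defined, so $J(x)$ has full row rank), the complementary projector $N$ has rank $n-m$. Because Problem~\ref{prb:prob} imposes $m < n$, we have $\operatorname{rank}(N) \geq 1$, so $N$ is nonzero.

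To conclude, I would invoke the standard fact that a nonzero orthogonal projection matrix has 2-norm equal to one: its eigenvalues all lie in $\{0,1\}$ (by idempotence) and at least one eigenvalue is $1$ (because $N \neq 0$), and since $N$ is symmetric its spectral norm equals the largest eigenvalue in absolute value, which gives $\|I_n - T(x)J(x)\|_2 = 1$. The only step requiring care is making sure the hypotheses underlying "$T(x)$ is well defined" really deliver that $J(x)$ has full row rank $m$, so that $JJ^T$ is invertible and the rank count $\operatorname{rank}(N) = n-m > 0$ is legitimate; everything else is routine linear algebra.
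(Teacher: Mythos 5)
Your proof is correct and follows essentially the same route as the paper: show $T(x)J(x)$ is symmetric and idempotent, deduce that $I_n - T(x)J(x)$ is an orthogonal projector whose eigenvalues lie in $\{0,1\}$, and conclude the 2-norm equals $1$. You are in fact slightly more careful than the paper, since you justify that the projector is nonzero via the rank count $n-m\geq 1$, a point the paper leaves as a conditional (``if $A^TA$ is a nonzero matrix\ldots'').
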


\begin{proof}
First we will prove that $(T(x)J(x))$ is a Hermitian matrix. We have:
\begin{IEEEeqnarray}{rCl} 
(T(x)J(x))^T &=& (J(x)^T (J(x)J(x)^T)^{-1} J(x))^T \nonumber \\
&=& J(x)^T (J(x)J(x)^T)^{-1} J(x) \nonumber \\
&=& T(x)J(x) .
\end{IEEEeqnarray}

Next we will prove that $(I_n- T(x)J(x))$ is an idempotent matrix:
\begin{IEEEeqnarray}{rCl} 
& &(I_n - T(x)J(x)) (I_n - T(x)J(x)) \nonumber \\
&= & I_n - 2 T(x)J(x) + T(x)J(x)T(x)J(x) \nonumber \\
&= & I_n - T(x)J(x) .
\end{IEEEeqnarray}

Denote $A := I_n - T(x)J(x)$. Then $A$ is a Hermitian idempotent matrix, i.e.
\begin{align}
A^T &=A ,\\
AA &=A .
\end{align}

Let $\rho_i$ be an eigenvalue of $A^T A$, then $\rho_i$ satisfies
\begin{align}
	\rho_i x &= A^T A x \nonumber \\
	&= A^T A (A^T A x ) \nonumber \\
	&= A^T A (\rho_i x) \nonumber \\
	&= \rho_i (A^T A x) \nonumber \\
	&= \rho^2_i  x, ~\forall x \in \cD .
\end{align}
This results in
\begin{equation}
	\rho_i = \rho_i^2 ,
\end{equation}
which means that $\rho_i$ is equal to either 0 or 1. If $A^T A$ is a nonzero matrix then at least one eigenvalue is 1. The 2-norm of $A$ is defined as the maximum singular value of $A$. This is equal to the square root of the maximum eigenvalue of $A^T A$, which is equal to 1.
\end{proof}

\begin{theorem}   \label{thm:optimality}
	If algorithm~\eqref{eqn:proposediter1} converges to a fixed point $x^*$, then $x^*$ satisfies the KKT optimality conditions~\eqref{eqn:KKTcons}.
\end{theorem}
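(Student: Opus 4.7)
\medskip

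\noindent\textbf{Proof proposal.} My plan is to use the fixed‑point equation to decompose the statement along the orthogonal splitting $\Rset^n = \ker(J^*) \oplus \mathrm{range}(J^{*T})$ induced by the projector $T^*J^*$. Setting $x_{k+1}=x_k=x^*$ in \eqref{eqn:proposediter1} and grouping terms gives
\begin{equation}
(1-\alpha)\bigl(I_n - T^*J^*\bigr)x^* = -\,T^*F^*. \nonumber
\end{equation}
The aim is to show that both sides of this identity vanish separately, and then to read off the KKT conditions.

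Next I would observe that $T^*J^* = J^{*T}(J^*J^{*T})^{-1}J^*$ is the orthogonal projector onto $\mathrm{range}(J^{*T})$ (this is already implicit in Lemma~\ref{lem:normeq1}, which showed $T^*J^*$ is Hermitian and idempotent). Consequently $I_n - T^*J^*$ is the orthogonal projector onto $\ker(J^*)$, so the left‑hand side lies in $\ker(J^*)$. On the other hand, $T^*F^* = J^{*T}(J^*J^{*T})^{-1}F^*$ lies in $\mathrm{range}(J^{*T})$. Since $\ker(J^*)$ and $\mathrm{range}(J^{*T})$ are orthogonal complements, the equality above forces
\begin{equation}
T^*F^* = 0, \qquad (1-\alpha)\bigl(I_n-T^*J^*\bigr)x^* = 0. \nonumber
\end{equation}
Because $\alpha \in (0,1)$, the second identity simplifies to $x^* = T^*J^*x^*$.

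From $T^*F^* = 0$ I would recover $F^* = 0$ by left‑multiplying with $J^*$ and using $J^*T^* = I_m$. For the stationarity part, the identity $x^* = T^*J^*x^* = J^{*T}(J^*J^{*T})^{-1}J^*x^*$ shows that $x^*$ lies in $\mathrm{range}(J^{*T})$, so I can define
\begin{equation}
\lambda^* := -2\,(J^*J^{*T})^{-1} J^* x^*, \nonumber
\end{equation}
which yields $2x^* + J^{*T}\lambda^* = 2\bigl(x^* - T^*J^*x^*\bigr) = 0$. Combined with $F(x^*)=0$, the pair $(x^*,\lambda^*)$ satisfies \eqref{eqn:KKTcons}.

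The only genuinely non‑obvious step is recognising the orthogonal decomposition: once one sees that $T^*F^*$ and $(I_n-T^*J^*)x^*$ live in mutually orthogonal subspaces, the rest is bookkeeping. A minor technical assumption needed throughout is that $J^*$ has full row rank (so that $T^*$ is well defined); this is already implicit in the definition of the iteration and in Lemma~\ref{lem:normeq1}, so I would simply flag it at the outset rather than treat it as a separate obstacle.
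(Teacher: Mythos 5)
Your proposal is correct, and it reaches exactly the same two intermediate identities as the paper, namely $T^*F^*=0$ and $(I_n-T^*J^*)x^*=0$, followed by the same construction of $\lambda^*$. The only real difference is the mechanism used to split the fixed-point equation: the paper left-multiplies the iteration by $T_kJ_k$, uses $J_kT_k=I_m$ to obtain a second relation, and subtracts the two at the fixed point, whereas you write the single identity $(1-\alpha)(I_n-T^*J^*)x^*=-T^*F^*$ and observe that the left side lies in $\ker(J^*)$ while the right side lies in $\mathrm{range}(J^{*T})$, so both must vanish. These are two phrasings of the same projection argument; yours makes the geometry explicit, the paper's stays purely algebraic. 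One place where your version is actually tighter: to pass from $T^*F^*=0$ to $F^*=0$ you left-multiply by $J^*$ and use $J^*T^*=I_m$, which correctly establishes injectivity of $T^*$; the paper instead asserts that an $n\times m$ matrix with $m<n$ ``has an empty null space,'' which is only true under a full-column-rank hypothesis that your argument supplies for free. Your closing remark about needing $J^*$ to have full row rank is apt and matches the paper's implicit standing assumption that $T(x)$ is well defined.
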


\begin{proof}  
	Left-multiplying~\eqref{eqn:proposediter1} with $T_k J_k$, we have
	\begin{equation}\label{eqn:TJxkp1}
	T_k  J_k  x_{k+1} = \alpha T_k J_k x_k + (1-\alpha) T_k J_k x_k - T_k F_k  .
	\end{equation}
	Subtracting~\eqref{eqn:TJxkp1} from~\eqref{eqn:proposediter1} results in
	\begin{align}
	x_{k+1} &= \alpha x_k - \alpha T_k J_k x_k + T_k J_k x_{k+1} .
	\end{align}
	Consequently, if algorithm~\eqref{eqn:proposediter1} converges to a fixed point $x^*$ then
	\begin{align}\label{eqn:fixedpoint1}
	x^* &= \alpha x^* -\alpha T^* J^* x^* +T^* J^* x^* \nonumber\\
	&= \alpha x^* + (1-\alpha) T^* J^* x^*.
	\end{align}
	Besides, due to~\eqref{eqn:proposediter1}, $x^*$ also satisfies
	\begin{equation}\label{eqn:fixedpoint2}
	x^* = \alpha x^* + (1-\alpha) T^* J^* x^* - T^*  F^*.
	\end{equation}
	Subtracting~\eqref{eqn:fixedpoint2} from~\eqref{eqn:fixedpoint1} results in
	\begin{equation}\label{eqn:TFeq0}
	T^*  F^* = 0_{n \times 1}.
	\end{equation}
	Here, $T^*$ is a $n\times m$ matrix where $m<n$. This means that $T^* $ has an empty null space. Therefore, \eqref{eqn:TFeq0} is equivalent to 
	\begin{equation}\label{eqn:KKTcons2}
	F^*  = 0_{m \times 1}.
	\end{equation}
	
	From~\eqref{eqn:fixedpoint1} and the fact that $\alpha<1$, it follows that
	\begin{equation}\label{eqn:fixedpoint3}
	x^* =  T^* J^* x^* ,
	\end{equation}
	which is equivalent to
	\begin{equation}\label{eqn:fixedpoint4}
	(I_n -  T^* J^*) x^* = 0_{n\times 1}.
	\end{equation}
	This means that $x^*$ is in the null space of $(I_n-T^* J^*)$, which is the range of $J^{*T}$~\cite{Bernstein2009matrix}. Therefore, there exists a vector $\lambda \in \Rset^m$ such that
	\begin{equation}
	x^* = -\frac{1}{2} J^{*T} \lambda ,
	\end{equation}
	or equivalently
	\begin{equation}\label{eqn:KKTcons1}
		2x^* + J^{*T} \lambda =0_{n \times 1} .
	\end{equation}
	
	From~\eqref{eqn:KKTcons2} and~\eqref{eqn:KKTcons1}, we conclude that $x^*$ satisfies the KKT optimality conditions~\eqref{eqn:KKTcons}.	
\end{proof}

%\begin{theorem}
%Let $\cD$ be a convex subset of $\Rset$ in which the following conditions hold
%\begin{enumerate}[(i)]
%\item $J(x)$ is well defined and Lipschitz continuous:
%\begin{equation}
%\| J(x) - J(y) \|_2 \leq \gamma \| x - y \|_2, ~ \forall x,y\in \cD,
%\end{equation}
%where $\gamma \in \Rset_{>0}$.
%\item $T(x)$ is well defined and bounded:
%\begin{equation}
%\| T(x) \|_2 \leq \xi, ~ \forall x\in \cD ,
%\end{equation}
%where $\xi \in \Rset_{>0}$.
%\item $T(x)J(x)$ is Lipschitz continuous:
%\begin{equation}
%\| T(x)J(x) - T(y)J(y) \|_2 \leq \theta \| x - y \|_2, ~ \forall x,y\in \cD ,
%\end{equation}
%where $\theta \in \Rset_{>0}$.
%\item a
%\end{enumerate}
\begin{theorem} \label{thm:convergence}
Let $\cD \subseteq \Rset^n$ be an open convex invariant set for~\eqref{eqn:proposediter1} in which the following conditions hold
\begin{enumerate}[(i)]
\item $F(x)$ is Lipschitz continuous,
\item $J(x)$ is well defined and Lipschitz continuous,
\item $T(x)$ is well defined and bounded,
\item $T(x)J(x)$ and $T(x)J(x)x$ are Lipschitz continuous,
\item there exists a solution $x^* $ of the KKT optimality conditions~\eqref{eqn:KKTcons} in~$\cD$.
\end{enumerate}
Then there exist a $\alpha\in\Rset_{(0,1)}$ and a $r \in \Rset_{>0}$ such that $\cB(x^*,r) \subseteq \cD$ and iteration~\eqref{eqn:proposediter1} converges to a fixed point that satisfies~\eqref{eqn:KKTcons} for any initial condition $x_0 \in \cB(x^*,r)$.
%If $x_0$ is close enough to a solution $x^*$ of~\eqref{eqn:xcons} then there exists an $\alpha \in \Rset_{[0,1)}$ such that algorithm~\eqref{eqn:proposediter1} converges to $x^*$.
\end{theorem}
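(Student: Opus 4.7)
The plan is a local contraction argument on the error $e_k := x_k - x^*$. By hypothesis~(v), $x^*$ satisfies the KKT conditions~\eqref{eqn:KKTcons}, so $F^* = 0_{m\times 1}$ and $x^*$ lies in the range of $J^{*T}$ (from $2x^* + J^{*T}\lambda = 0_{n\times 1}$), giving $T^* J^* x^* = x^*$. Thus $x^*$ is itself a fixed point of~\eqref{eqn:proposediter1}, and subtracting that identity yields
\begin{equation*}
e_{k+1} = \alpha e_k + (1-\alpha)(T_k J_k x_k - T^* J^* x^*) - T_k F_k .
\end{equation*}

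I would then linearize the constraint with Lemma~\ref{lem:meanvalue} applied at $x = x^*$, $y = x_k$: since $F^* = 0$, the residual $\rho_k := F_k - J_k e_k$ satisfies $\|\rho_k\|_2 \le (\gamma/2)\|e_k\|_2^2$, and condition~(iii) supplies $M$ with $\|T_k\|_2 \le M$, so $\|T_k \rho_k\|_2 \le (M\gamma/2)\|e_k\|_2^2$. Writing $T_k F_k = T_k J_k e_k + T_k \rho_k$ and expanding $T_k J_k x_k - T^* J^* x^* = T_k J_k e_k + (T_k J_k - T^* J^*) x^*$, the $T_k J_k e_k$ contributions combine to give the clean recursion
\begin{equation*}
e_{k+1} = \alpha (I_n - T_k J_k) e_k + (1-\alpha)(T_k J_k - T^* J^*) x^* - T_k \rho_k .
\end{equation*}
Now Lemma~\ref{lem:normeq1} gives $\|I_n - T_k J_k\|_2 = 1$ and condition~(iv) supplies a Lipschitz constant $L$ for $T(x)J(x)$; setting $c := L\|x^*\|_2$, the term-by-term bound is $\|e_{k+1}\|_2 \le [\alpha + (1-\alpha) c]\|e_k\|_2 + (M\gamma/2)\|e_k\|_2^2$.

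To close the argument I would pick any $\alpha \in (0,1)$, which makes the first-order coefficient $\alpha + (1-\alpha) c = c + \alpha(1-c)$ strictly less than~$1$ provided $c < 1$ (attainable by restricting to a small enough neighborhood, so that $L$ is close to the norm of the derivative of $T(x)J(x)$ at $x^*$). I would then pick $r > 0$ with $\cB(x^*, r) \subseteq \cD$ small enough that $\kappa := \alpha + (1-\alpha) c + (M\gamma/2) r < 1$. Invariance of $\cD$ under~\eqref{eqn:proposediter1} keeps iterates inside $\cD$, induction gives $\|e_{k+1}\|_2 \le \kappa \|e_k\|_2$, and so $x_k \to x^*$, which is a fixed point and, by Theorem~\ref{thm:optimality}, satisfies~\eqref{eqn:KKTcons}.

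The hard part is the algebraic massaging that leads to the clean recursion: a naive bound that applies the Lipschitz constant of $T(x)J(x)x$ from~(iv) directly to $T_k J_k x_k - T^* J^* x^*$ loses the orthogonal-projection factor $(I_n - T_k J_k)$ and hence the unit-norm identity of Lemma~\ref{lem:normeq1}, producing a leading coefficient $\ge 1$. The decomposition $T_k J_k x_k - T^* J^* x^* = T_k J_k e_k + (T_k J_k - T^* J^*) x^*$ is what isolates the projection factor and lets $\alpha$ be used as a tuning knob against the Lipschitz drift; the quadratic remainder from Lemma~\ref{lem:meanvalue} is then absorbed by shrinking $r$.
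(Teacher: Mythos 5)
Your route is genuinely different from the paper's: you contract the error $e_k = x_k - x^*$ directly toward $x^*$, whereas the paper bounds successive differences, showing $\|x_{k+1}-x_k\|_2 \le (K+L\|x_k-x_{k-1}\|_2)\|x_k-x_{k-1}\|_2$ and concluding via a Cauchy-type argument (so it only establishes convergence to \emph{some} fixed point, then invokes Theorem~\ref{thm:optimality}). Your algebra up to the recursion $e_{k+1} = \alpha (I_n - T_k J_k)e_k + (1-\alpha)(T_kJ_k - T^*J^*)x^* - T_k\rho_k$ is correct, and the observation that a KKT point is automatically a fixed point of~\eqref{eqn:proposediter1} is a clean converse to Theorem~\ref{thm:optimality} that the paper does not state. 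However, there is a genuine gap at the closing step. Your first-order coefficient $\alpha + (1-\alpha)c$ with $c = L\|x^*\|_2$ is the convex combination $c + \alpha(1-c)$ of $1$ and $c$: it is strictly less than $1$ if and only if $c<1$, and this holds (or fails) for \emph{every} $\alpha\in(0,1)$. So $\alpha$ is not a tuning knob in your recursion at all, and the whole argument hinges on $c<1$. That condition does not follow from the hypotheses: $\|x^*\|_2$ is fixed, and the local Lipschitz constant $L$ of $T(x)J(x)$ on a shrinking neighborhood of $x^*$ does not tend to $0$ --- it tends, at best, to the norm of the derivative of $x\mapsto T(x)J(x)$ at $x^*$, which can be arbitrarily large. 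Shrinking $r$ only absorbs the quadratic remainder; it cannot make $c<1$. As written, your proof establishes the theorem only under the additional, unstated hypothesis $L\|x^*\|_2 < 1$.

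It is instructive to see how the paper dodges the analogous difficulty. In its difference recursion~\eqref{eqn:difference} the weight $\alpha$ sits on the projection term $(I_n - T_{k-1}J_{k-1})(x_k-x_{k-1})$, and the paper argues that this term contracts \emph{strictly} (a constant $M<1$ in~\eqref{eqn:ineq3}, rather than the unit bound of Lemma~\ref{lem:normeq1}) because $J_{k-1}(x_k - x_{k-1}) = -F_{k-1}\neq 0_{m\times 1}$ keeps the increment away from the null space of $J_{k-1}$. The potentially large Lipschitz constant $N$ of $T(x)J(x)x$ then carries the weight $(1-\alpha)$, and $K = \alpha M + (1-\alpha)N < 1$ is achieved by taking $\alpha$ close to $1$, per~\eqref{eqn:alpharange}. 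To repair your argument in the same spirit you would need a strict bound $\|(I_n - T_kJ_k)e_k\|_2 \le M\|e_k\|_2$ with a uniform $M<1$ --- plausible, since $e_k$ should remain close to the range of $J^{*T}$, which is transversal to the null space of $J_k$ --- after which $\alpha$ near $1$ genuinely suppresses the drift term $(1-\alpha)c$ even when $c\ge 1$. Without such a strict projection bound, the interpolation coefficient buys you nothing and the contraction fails whenever $L\|x^*\|_2 \ge 1$.
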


\begin{proof}
Left-multiplying~\eqref{eqn:proposediter1} with $J_k$ yields that
\begin{equation}
J_k(x_{k+1}-x_k)=-F_k.
\end{equation}

We have
\begin{IEEEeqnarray}{rCl} 
&& x_{k+1}-x_{k} \nonumber \\
&=& \alpha (x_{k}-x_{k-1}) + (1-\alpha)(T_k J_k x_k-T_{k-1} J_{k-1} x_{k-1}) \nonumber \\
&&- (T_k F_k-T_{k-1} F_{k-1}) \nonumber \\
&=& \alpha (x_{k}-x_{k-1}) + (1-\alpha)(T_k J_k x_k-T_{k-1} J_{k-1} x_{k-1}) \nonumber \\
&&- T_k F_k-T_{k-1}J_{k-1}(x_k-x_{k-1}) \nonumber \\
&=& \alpha (I_n-T_{k-1}J_{k-1})(x_{k}-x_{k-1}) \nonumber \\
&&+(1-\alpha)(T_k J_k- T_{k-1}J_{k-1})x_k - T_k F_k  . \label{eqn:difference}
\end{IEEEeqnarray}

Due to the condition that $T(x)$ is bounded for all $x \in \cD$ and Lemma~\ref{lem:meanvalue}, we have:
\begin{align}
\|T_k F_k \|_2 \leq& \| T_{k} \|_2 \|F_k \|_2 \nonumber \\
= &  \|T_{k} \|_2 \| F_k-F_{k-1}-J_{k-1}(x_k-x_{k-1})\|_2 \nonumber \\
\leq& L \|x_k-x_{k-1}\|_2^2 , \label{eqn:ineq1} 
\end{align}
where $L>0$.

Due to the condition that $T(x)J(x)$ is Lipschitz continuous for all $x \in \cD$, it follows that:
\begin{equation}\label{eqn:ineq2}
\| (T_k J_k - T_{k-1}J_{k-1})x_k \|_2 \leq N \|x_k-x_{k-1}\|_2 ,
\end{equation}
where $N>0$.

Lemma~\ref{lem:normeq1} states that $\| I_n - T_{k-1}J_{k-1} \|_2 = 1$. Consequently:
\begin{IEEEeqnarray}{rCl} 
&& \| (I_n-T_{k-1}J_{k-1})(x_{k}-x_{k-1}) \|_2 \nonumber\\
& \leq & \| (I_n-T_{k-1}J_{k-1}) \|_2 \| (x_{k}-x_{k-1}) \|_2 \nonumber \\
& = & \|x_{k}-x_{k-1} \|_2 . \label{eqn:ineq31}
\end{IEEEeqnarray}
The equality happens if and only if $(x_{k}-x_{k-1}) $ is in the range of $(I_n-T_{k-1}J_{k-1})$, which is the null space of $J_{k-1}$~\cite{Bernstein2009matrix}. This is equivalent to:
\begin{equation}
J_{k-1}(x_{k}-x_{k-1})=-F_{k-1}=0_{m \times 1} .
\end{equation}
This shows that the equality happens if and only if $x_{k-1}$ is an exact solution of~\eqref{eqn:xcons}. Let us assume that $F_{k-1} \neq 0_{m \times 1}$ for any $k< \infty$. Then there exists a constant $M\in \Rset_{(0,1)}$ such that
\begin{equation}\label{eqn:ineq3}
\| (I_n-T_{k-1}J_{k-1})(x_{k}-x_{k-1}) \|_2 \leq M \|x_{k}-x_{k-1} \|_2 .
\end{equation}

From~\eqref{eqn:difference}, \eqref{eqn:ineq1}, \eqref{eqn:ineq2} and~\eqref{eqn:ineq3}, it follows that
\begin{IEEEeqnarray}{rCl} 
\|x_{k+1}-x_{k}\|_2 &\leq & K \|x_k-x_{k-1}\|_2 + L \|x_k-x_{k-1}\|_2^2 \nonumber \\
&=& ( K  + L \|x_k-x_{k-1}\|_2)\|x_k-x_{k-1}\|_2 , \nonumber\\ \label{eqn:total_ineq}
\end{IEEEeqnarray}
where $K=\alpha M + (1-\alpha) N $. If $N\leq 1$ then $K<1$ for all $\alpha \in \Rset_{(0,1)}$. If $N>1$ then $K<1$ if and only if
\begin{equation}\label{eqn:alpharange}
\frac{N-1}{N-M} < \alpha <1
\end{equation}
Therefore, there always exists an $\alpha \in \Rset_{(0,1)}$ such that $K<1$.

From~\eqref{eqn:proposediter1} and the conditions that $F(x)$ and $T(x)J(x)x$ are Lipschitz continuous, we have
\begin{IEEEeqnarray}{rCl} 
\| x_1-x_0 \|_2 &=& \|(1-\alpha) (T_0 J_0 x_0 - x_0) - T_0 F_0 \|_2 \nonumber \\
&=& \| (1-\alpha) (T_0 J_0 x_0 - x_0 - T^* J^* x^* + x^*) \nonumber\\
&& - (T_0 F_0 - T_0 F^*)\|_2 \nonumber \\
&=& \| (1-\alpha) (T_0 J_0 x_0 - T^* J^* x^*) \nonumber\\
&& -(1-\alpha) (x_0 - x^*) - T_0 (F_0 - F^*)\|_2 \nonumber\\
&=& \| (1-\alpha) (T_0 J_0 x_0 - T^* J^* x^*) \|_2 \nonumber\\
&& +\|(1-\alpha) (x_0 - x^*)\|_2 + \|T_0(F_0 - F^*)\|_2 \nonumber\\
& \leq & Q \|x_0 - x^*\|_2 ,
\end{IEEEeqnarray}
where $Q>0$. If $x_0$ is close enough to $x^*$ such that
\begin{equation} \label{eqn:ballradius}
\|x_0 - x^*\|_2 < \frac{1-K}{Q L} ,
\end{equation}
then it follows that
\begin{equation} \label{eqn:induction0}
K  + L \|x_1-x_{0}\|_2 < 1 .
\end{equation}
Next, we will prove that if
\begin{equation} \label{eqn:induction1}
K  + L \|x_{k+1}-x_{k}\|_2 < 1 ,
\end{equation}
then
\begin{equation} \label{eqn:induction2}
K  + L \|x_{k+2}-x_{k+1}\|_2 < 1 .
\end{equation}
Indeed, if~\eqref{eqn:induction1} holds then due to~\eqref{eqn:total_ineq} we have
\begin{equation} 
\|x_{k+2}-x_{k+1}\|_2 < \|x_{k+1}-x_{k}\|_2 .
\end{equation}
This leads to
\begin{equation}
K  + L \|x_{k+2}-x_{k+1}\|_2 < K  + L \|x_{k+1}-x_{k}\|_2 < 1 .
\end{equation}
We have proved that if~\eqref{eqn:induction1} holds then~\eqref{eqn:induction2} holds. Since~\eqref{eqn:induction0} also holds for any $x_0$ which satisfies~\eqref{eqn:ballradius}, it follows by induction that
\begin{equation}
K  + L \|x_{k+1}-x_{k}\|_2 < 1 ,~ \forall k=0,1,\ldots.
\end{equation}
Therefore, it follows from~\eqref{eqn:total_ineq} that
\begin{equation}
\|x_{k+2}-x_{k+1}\|_2 < \|x_{k+1}-x_{k}\|_2 ,~ \forall k=0,1,\ldots.
\end{equation}
Therefore, algorithm~\eqref{eqn:proposediter1} converges, and by Theorem~\ref{thm:optimality}, it converges to a solution of~\eqref{eqn:KKTcons} for any $x_0 \in \cB(x^*,r)$, where
\begin{equation}
r=\frac{1-K}{QL} ,
\end{equation}
and $\alpha$ satisfies~\eqref{eqn:alpharange}.
\end{proof}

In the case when $\cB\left(x^*,\frac{1-K}{QL}\right)$ does not lie completely inside~$\cD$, then $r$ can be chosen as the solution of
\begin{align*}
& \max_{r} r \nonumber\\
\text{subject to} & \nonumber\\
& \cB(x^*,r) \subseteq \cD .
\end{align*}

It is worth mentioning that the assumptions on Lipschitz continuity of $F(x)$, $J(x)$ and boundedness of $T(x)$ are commonly used in proving convergence of Newton-based algorithms. Different authors also use different additional assumptions such that the convergence holds, see for example~\cite{BENISRAEL1966,Haubler1986,Nashed93,Chen1997,Levin2001}.

%If $\cB(x^*,r)$ does not lie completely inside~$\cD$ then $r$ can be chosen as the solution of
%\begin{align}
%	& \max_{r}  r \nonumber\\
%	\text{subject to} & \nonumber\\
%	& \cB(x^*,r) \subseteq \cD. \nonumber
%\end{align}

\begin{remark}
	In the case when the equality constraint~\eqref{eqn:xcons} is nonconvex, the fixed point of both the Lagrange-Newton method and the proposed method can be either a local minimum or a local maximum. To determined whether it is a local minimum or a local maximum, it is necessary to check the second-order conditions or to check the value of the cost function in the vicinity of the fixed point. How to guarantee convergence of the proposed algorithm to a global minimum will be the subject of future research.
\end{remark}

In summary, similar to the classical Lagrange-Newton algorithm, the proposed algorithm converges locally to a KKT point of Problem~\ref{prb:prob}. However, the proposed algorithm requires less computational cost and less amount of memory than the Lagrange-Newton algorithm. This is beneficial for large-scale problems and real-time applications.

%%%%%%%%%%%%%%%%%%%%%%%%%%%%%%%%%%%%%%%%%%%%%%%%%%%%%%%%%%%%%%%%%%%%%%%%%%%%%%%%
\section{EXAMPLES}\label{sec:example}
This section presents two examples to verify the performance of the proposed algorithm.

\subsection{Least square fitting of ellipses}
Fitting of ellipses to data points is a fundamental task in pattern recognition and computer vision. This problem has been extensively studied and widely applied. In this example, we follow the direct least square fitting method proposed in~\cite{Fitzgibbon1999}.

An ellipse in 2D $(x,y)$-coordinate can be represented by a second order polynomial:
\begin{equation}
\Gamma(\theta,\eta) =\theta^T \eta= ax^2 + bxy + c y^2 +dx+ey+f =0 ,
\end{equation}
where $\theta$ is the coefficients vector:
\begin{equation}
\theta = \begin{bmatrix}
a&b&c&d&e&f
\end{bmatrix}^T ,
\end{equation}
and
\begin{equation}
\eta = \begin{bmatrix}
x^2&xy&y^2&x&y&1
\end{bmatrix}^T .
\end{equation}
The polynomial $\Gamma(\theta,\eta)$ is called the algebraic distance of a point $(x, y)$ to the ellipse~$\Gamma(\theta,\eta)=0$. An approach to fit an ellipse to $N_p$ data points $(x_i,y_i)$, $i=1,\ldots,N_p$, is by minimizing the sum of squared algebraic distances:
\begin{equation} \label{eqn:ellipseobj}
\sum_{i=1}^{N_p} \Gamma(\theta,\eta_i)=\theta^T D^T D \theta ,
\end{equation}
where
\begin{equation}
D= \begin{bmatrix}
\eta_1 & \eta_2 &\cdots& \eta_{N_p}
\end{bmatrix}^T .
\end{equation}

In addition, an ellipse has to satisfy the constraint that the discriminant $b^2-ac$ is negative. Since we have the freedom to arbitrarily scale the parameters, the constraint can be written as
\begin{equation}\label{eqn:ellipsecons}
\theta^T S \theta = -1,
\end{equation}
where 
\begin{equation}
S=\begin{bmatrix}
0&0&-2&0&0&0\\
0&1&0&0&0&0\\
-2&0&0&0&0&0\\
0&0&0&0&0&0\\
0&0&0&0&0&0\\
0&0&0&0&0&0\\
\end{bmatrix} .
\end{equation}

From~\eqref{eqn:ellipseobj} and~\eqref{eqn:ellipsecons}, the ellipse fitting problem can be formulated as
\begin{problem}\label{prb:fitellipse}
	\begin{align*}
	& \min_{\theta}  \theta^T R \theta \nonumber\\
	\text{subject to} & \nonumber\\
	& \theta^T S \theta +1=0 ,
	\end{align*}
\end{problem}
where $R=D^TD$ is positive definite. Following the transformation derived in Section~\ref{sec:probform}, Problem~\ref{prb:fitellipse} can be transformed into
\begin{problem}\label{prb:fitellipse1}
	\begin{align*}
	& \min_{\varphi}  \varphi^T \varphi \nonumber\\
	\text{subject to} & \nonumber\\
	& \varphi^T H \varphi + 1 =0 ,
	\end{align*}
\end{problem}
where
\begin{equation}\label{eqn:ellipseprbtransform}
\varphi = R^{1/2} \theta ,\qquad H = R^{-1/2} S R^{-1/2} .
\end{equation}

In this example, 11 data points are generated from the ellipse shown in Fig.~\ref{fig:ellipsefitting}. The data points are then corrupted by adding zero-mean Gaussian noise of standard deviation~$\sigma=1$. 

The fitting problem is solved using both the Lagrange-Newton algorithm and the proposed algorithm. Since the proposed algorithm can only solve problems in the form of Problem~\ref{prb:fitellipse1}, it has to spend time on transforming the original Problem~\ref{prb:fitellipse} into Problem~\ref{prb:fitellipse1}. For a fair comparison, the proposed algorithm solves Problems~\ref{prb:fitellipse1}, while the Lagrange-Newton algorithm solves directly the original Problem~\ref{prb:fitellipse}, in order to avoid wasting computation time on the transformation~\eqref{eqn:ellipseprbtransform}. 

The algorithms are implemented on a 2.4GHz computer. The predefined accuracy is~$\varepsilon=10^{-4}$. The initial point for the Lagrange-Newton algorithm is chosen as~$\theta_0=\begin{bmatrix}
1&1&1&1&1&1
\end{bmatrix}^T$, and the initial point for the proposed algorithm is chosen as~$\varphi_0=R^{1/2} \theta_0 $. For the proposed algorithm, $\alpha$ is chosen equal to $0.2$. 

Both algorithms converge to the same solution as shown in Fig.~\ref{fig:ellipsefitting}. The Lagrange-Newton algorithm converges after 4 iterations, while the proposed algorithm converges after 5 iterations. However, despite the fact that the proposed algorithm needs more iterations to converges and also has to calculate the transformation~\eqref{eqn:ellipseprbtransform}, its total computation time is 0.43ms, while the total computation time of the Lagrange-Newton algorithm is 0.62ms. If $\alpha$ is chosen equal to $0.1$ then the proposed algorithm also converges after 4 iterations like the Lagrange-Newton algorithm, and the total computation time reduces further to 0.39ms. This demonstrates the advantage of the proposed algorithm in computation time.

\begin{figure}[t]
	\centering
	\includegraphics[width=1\columnwidth]{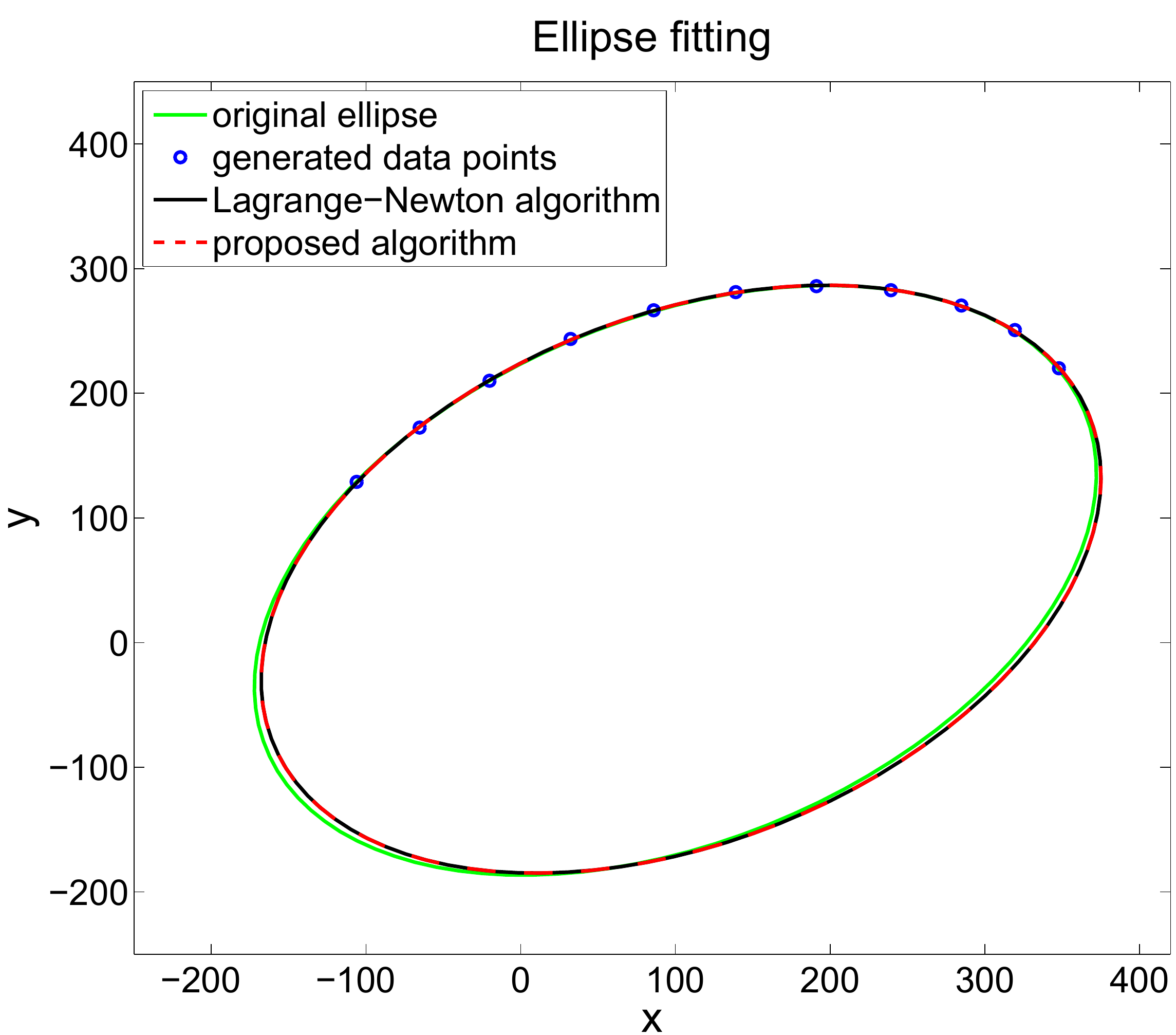}
	\caption{The original ellipse and the returned ellipses.}
	\label{fig:ellipsefitting}
\end{figure}

\subsection{State reference generation for torque control in externally excited synchronous machines}
State reference generation for torque control is a challenging problem for externally excited synchronous machines (EESM). It requires solving a quadratic optimization problem with quadratic equality constraints in real-time~\cite{Carpiuc2015}. This section solves the EESM state reference problem using the proposed algorithm.

The electromagnetic torque of a EESM is calculated as follows
\begin{equation} \label{eqn:torqeeq}
T_e = \frac{3P_p}{2}\left(M_d i_{qs} i_e + (L_d -L_q) i_{ds} i_{qs}\right) ,
\end{equation}
where $i_{ds}$ and $i_{qs}$ are the $d$- and $q$-axis stator currents, $i_e$ is the excitation current, $P_p$ is the number of pole pairs, $M_d$ is the mutual inductance between stator and rotor windings, $L_d$ and $L_q$ are the $d$- and $q$-axis inductances, respectively.

The copper losses is calculated as
\begin{equation}
P_{Cu}= \frac{3}{2} R_s(i_{ds}^2 +i_{qs}^2) + R_e i_e^2 ,
\end{equation}
where $R_s$ is the stator resistance and $R_e$ is the excitation resistance.

In torque control of EESM, the state variables can be chosen as
\begin{equation}
x = \begin{bmatrix}
i_{ds}\sqrt{\frac{3R_s}{2}} & i_{qs}\sqrt{\frac{3R_s}{2}}  & i_e \sqrt{R_e}
\end{bmatrix}^T ,
\end{equation}
and the output is the electromagnetic torque
\begin{equation}
y=T_e .
\end{equation}
The torque equation~\eqref{eqn:torqeeq} can be rewritten as
\begin{equation}
y = x^T C x ,
\end{equation}
where
\begin{equation}
C=\begin{bmatrix}
0 & \frac{P_p(L_d-Lq)}{2R_s} &0 \\
\frac{P_p(L_d-Lq)}{2R_s}  & 0 & \frac{3P_p M_d}{4} \sqrt{\frac{2}{3R_s R_e}} \\
0& \frac{3P_p M_d}{4} \sqrt{\frac{2}{3R_s R_e}} & 0
\end{bmatrix} .
\end{equation}
The copper losses can be rewritten as
\begin{equation}
P_{Cu}= x^T x .
\end{equation}

For torque control, let $y_{ref}$ be the reference torque, we have to find a value for the state variable $x$ such that the torque $y$ is equal to the reference $y_{ref}$. There is an infinite number of solutions for this problem.  When there is an infinite number of solutions, we have the freedom to choose the solution which is beneficial for the application. An attractive solution is the one that minimizes the copper losses. The state reference generation problem can be formulated as
\begin{problem}\label{prb:refgenEESM}
	\begin{align}
	& \min_{x}  x^T x \nonumber\\
	\text{subject to} & \nonumber\\
	&x^T C x - y_{ref} =0 . \nonumber
	\end{align}
\end{problem}

In this example, let us consider an EESM with the parameters given in Table~\ref{tbl:EESMparameters}. The reference torque is~$y_{ref}=10$Nm.
\begin{table}[h]
	\caption{EESM parameters}
	\label{tbl:EESMparameters}
	\begin{center}
		\begin{tabular}{|l|c|c|c|}
			\hline
			\textit{Parameter} & \textit{Symbol} & \textit{Value} & \textit{Unit} \\
			\hline
			Number of pole pairs & $P_p$ & $8$ &  \\
			Stator phase resistance & $R_s$ & $0.00775$ & $\Omega$ \\
			Excitation winding resistance & $R_e$ & $7.4$ & $\Omega$ \\
			Mutual inductance & $Md$ & $9.069$ & mH \\
			$d$-axis inductance & $L_d$ & $0.1488$ & mH \\
			$q$-axis inductance & $L_q$ & $0.2264$ & mH \\
			\hline
		\end{tabular}
	\end{center}
\end{table}

Problem~\ref{prb:refgenEESM} is solved using both the Lagrange-Newton algorithm and the proposed algorithm. The algorithms are implemented on a 2.4GHz computer. The predefined accuracy is~$\varepsilon=10^{-7}$. The initial point for both algorithms is chosen as~$x_0=\begin{bmatrix}
-1 &1&1
\end{bmatrix}^T$. For the proposed algorithm, $\alpha$ is chosen equal to $0.3$. 

%The results are summarized in Table~\ref{tbl:EESMresults}. 
%\begin{table}[h]
%	\caption{Numerical results}
%	\label{tbl:EESMresults}
%	\begin{center}
%		\begin{tabular}{|l|c|c|}
%			\hline
%			 & \textit{Lagrange-Newton} & \textit{proposed}\\ 
%			 & \textit{algorithm} & \textit{algorithm}  \\
%			\hline
%			Number of iterations & $7$ & $7$   \\
%			\hline
%			Final cost value $x^Tx$ & $52.6878$ & $52.6878$ \\
%			\hline
%			Final constraint error &  & \\
%			$\|x^T Cx-y_{ref}\|_2$ & $1.652 \times 10^{-13}$ & $1.492\times 10^{-13}$\\
%			\hline
%			Total computational time & $0.02$s & $0.005$s  \\
%			\hline
%		\end{tabular}
%	\end{center}
%\end{table}

Both algorithms converge after $7$ iterations. The resulting solutions are the same for both algorithms:
\begin{equation}
x^\ast =\begin{bmatrix}
-1.083 & 5.133 & 5.017
\end{bmatrix}^T .
\end{equation} 
The resulting currents are
\begin{align}
i_{ds} &=-10.046 \mathrm{(A)}, \\
i_{ds} &=47.604 \mathrm{(A)}, \\
i_e &=1.844 \mathrm{(A)}.
\end{align}
Although both algorithms converge to the same solutions, the total computational time of the Lagrange-Newton algorithm is 0.6ms, while the total computational time of the proposed algorithm is 0.3ms. This demonstrates the improvement in computational time that the proposed algorithm can bring. The algorithm is thus beneficial for real-time machines with high sampling frequency where the time for computation is limited and implementation of embedded optimization solvers is impractical.

%%%%%%%%%%%%%%%%%%%%%%%%%%%%%%%%%%%%%%%%%%%%%%%%%%%%%%%%%%%%%%%%%%%%%%%%%%%%%%%%
\section{CONCLUSIONS} \label{sec:conclusions}
This paper proposed a new algorithm for solving quadratic optimization problems with nonlinear equality constraints. It was proven that the proposed algorithm converges locally to a solution of the KKT optimality conditions. The algorithm is computationally efficient since it does not introduces additional optimization variables and does not require evaluation of the Hessian matrix. The effectiveness of the proposed algorithm was demonstrated in two examples. 

For future research, how to guarantee convergence of the proposed algorithm to a global minimum is of interest. Another interesting topic is how to compute the interpolation coefficient such that the fastest speed of convergence is achieved.

%\addtolength{\textheight}{-12cm}   % This command serves to balance the column lengths
                                  % on the last page of the document manually. It shortens
                                  % the textheight of the last page by a suitable amount.
                                  % This command does not take effect until the next page
                                  % so it should come on the page before the last. Make
                                  % sure that you do not shorten the textheight too much.

%%%%%%%%%%%%%%%%%%%%%%%%%%%%%%%%%%%%%%%%%%%%%%%%%%%%%%%%%%%%%%%%%%%%%%%%%%%%%%%%

%%%%%%%%%%%%%%%%%%%%%%%%%%%%%%%%%%%%%%%%%%%%%%%%%%%%%%%%%%%%%%%%%%%%%%%%%%%%%%%%

%%%%%%%%%%%%%%%%%%%%%%%%%%%%%%%%%%%%%%%%%%%%%%%%%%%%%%%%%%%%%%%%%%%%%%%%%%%%%%%%
%\section*{APPENDIX}
%
%Appendixes should appear before the acknowledgment.
%
\section*{ACKNOWLEDGMENT}
The authors are grateful to Dr. Sabin - Constantin Carpiuc for his help with the EESM example.

%%%%%%%%%%%%%%%%%%%%%%%%%%%%%%%%%%%%%%%%%%%%%%%%%%%%%%%%%%%%%%%%%%%%%%%%%%%%%%%%
\bibliographystyle{IEEEtran} % use IEEEtran.bst style
\bibliography{references}

\end{document}